\renewcommand{\baselinestretch}{1.0}
\renewcommand{\baselinestretch}{\baselinestretch}
\renewcommand{\baselinestretch}{1.2}
\newtheorem{theorem}{Theorem}
\newtheorem{lemma}[theorem]{Lemma}
\theoremstyle{definition}
\newtheorem{example}[theorem]{Example}
\newtheorem{theorem2}{Theorem}
\theoremstyle{remark}
\newtheorem{remark}[theorem2]{Remark}
\numberwithin{equation}{section}
\begin{document}

\title[The $\mathcal{H}$-flow translating solitons in ${\mathbb{R}}^{3}$ and ${\mathbb{R}}^{4}$]
{The $\mathcal{H}$-flow translating solitons in ${\mathbb{R}}^{3}$
and ${\mathbb{R}}^{4}$}
\author{Hojoo Lee}
\address{Department of Geometry and Topology, University of Granada, Granada, Spain.}
\email{ultrametric@gmail.com}
\thanks{This work was supported by the National Research Foundation of Korea Grant funded
by the Korean Government (Ministry of Education, Science and
Technology) [NRF-2011-357-C00007] and in part by 2010 Korea-France
STAR Program.}
\maketitle
\begin{abstract}
 We solve the prescribed Hoffman-Osserman Gauss map problem for translating soliton  surfaces to the mean
 curvature flow in ${\mathbb{R}}^{4}$. Our solution is inspired by Ilmanen's correspondence between translating soliton
 surfaces and minimal surfaces.
\end{abstract}
\bigskip
 The recent decades admit intensive research devoted to the study of solitons \cite{HS00} to
 the mean curvature flow (${\mathcal{H}}$-flow for short). The simplest example
 is the grim reaper $y = \ln \left( \cos x \right)$
 which moves by downward translation under the ${\mathcal{H}}$-flow.
 As known in \cite{CM11, Il94, Smo01, Wh11}, there exist fascinating geometric dualities between
 the ${\mathcal{H}}$-flow solitons and minimal submanifolds.

  A surface is a \textit{translator} \cite{Wh11} when its mean curvature vector
 field agrees with the normal component of a constant Killing vector field.
 Translators arise as Hamilton's convex eternal solutions and Huisken-Sinestrari's Type II singularities for
 the ${\mathcal{H}}$-flow, and become natural generalization of minimal surfaces.
  The eight equivalent definitions of minimal surfaces illustrated
 in \cite{MP} show the richness of the minimal surfaces theory. However,
 even in ${\mathbb{R}}^{3}$, only few non-minimal translators are known:

Altschuler and Wu \cite{AW94} showed the existence of the convex,
rotationally symmetric, entire graphical translator. Clutterbuck,
Schn\"{u}rer and Schulze \cite{CSS07} constructed the winglike
bigraphical translators, which are analogous to catenoids.
 Halldorsson \cite{Hal11} proved the existence of helicoidal translators. Nguyen \cite{Ng1} used Scherk's minimal towers to desingularize the intersection of a grim reaper cylinder
and a plane, and obtained a complete embedded translator. See also
her generalization \cite{Ng2}.

Our main goal is to adopt the splitting of the generalized Gauss
map of oriented surfaces in ${\mathbb{R}}^{4}$ to construct an
explicit Weierstrass type representation for translators in
${\mathbb{R}}^{4}$. We first introduce the complexification of the
generalized Gauss map. Inside the complex projective space
${\mathbb{C}}{\mathbb{P}}^{3}$, we take the variety
\[
{\mathcal{Q}}_{2}=\{\, [{\zeta}]=[{{\zeta}}_{1}: \cdots:
{{\zeta}}_{4} ] \in {\mathbb{C}}{\mathbb{P}}^{3} :
{{{\zeta}}_{1}}^{2} + \cdots + {{{\zeta}}_{4}}^{2} =0 \,\},
\]
which becomes a model for the Grassmannian manifold
${\mathcal{G}}_{2,2}$ of oriented planes in ${\mathbb{R}}^{4}$.
Reading the biholomorphic map from ${\mathcal{Q}}_{2}$ to
${\mathbb{C}}{\mathbb{P}}^{1} \times {\mathbb{C}}{\mathbb{P}}^{1}$
as a splitting of ${\mathcal{Q}}_{2}$, Hoffman and Osserman
\cite{HO80, HO85} defined the generalized Gauss map of a conformal
immersion ${\mathbf{X}}:\Sigma \rightarrow {\mathbb{R}}^{4}$, $z
\mapsto {\mathbf{X}}(z)$ as follows:
\[
\mathcal{G}(z)=\left[ \; \frac{\partial {\mathbf{X}}}{\partial
{z}} \; \right]= \left[ \; 1+{g}_{1}{g}_{2}, i(1-{g}_{1}{g}_{2}),
{g}_{1}-{g}_{2}, -i\left({g}_{1}+{g}_{2} \right) \; \right] \in
{\mathcal{Q}}_{2} \subset {\mathbb{C}}{\mathbb{P}}^{3}.
\]
We call the induced pair $\left(g_{1}, g_{2}\right)$ the
complexified Gauss map of the immersion ${\mathbf{X}}$.

\begin{lemma}[\textbf{Poincar\'{e}'s Lemma}] \label{PL}
Let $\xi: \Omega \rightarrow \mathbb{C}$ be a function on a simply
connected domain $\Omega \subset \mathbb{C}$. If we have
$\frac{\partial }{\partial \overline{z}} \xi(z) \in \mathbb{R}$
for all $z \in \Omega$, then there exists a function $x: \Omega
\rightarrow \mathbb{R}$ such that $\frac{\partial }{\partial {z}}
x(z)=\xi(z)$.
\end{lemma}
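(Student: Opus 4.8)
The plan is to reduce the statement to the classical Poincar\'e lemma for closed real $1$-forms on a simply connected planar domain. I will write $z = s + it$ and decompose $\xi = \xi_{1} + i\xi_{2}$ with $\xi_{1},\xi_{2}:\Omega\to\mathbb{R}$ (of class $C^{1}$, which costs nothing since the hypothesis $\partial_{\bar z}\xi\in\mathbb{R}$ already presupposes that $\xi$ is differentiable). Using the Wirtinger operator $\partial_{z}=\tfrac12(\partial_{s}-i\partial_{t})$, a real function $x:\Omega\to\mathbb{R}$ satisfies $\partial_{z}x=\xi$ precisely when $x_{s}=2\xi_{1}$ and $x_{t}=-2\xi_{2}$, i.e.\ when $dx=\omega$ for the real $1$-form $\omega:=2\xi_{1}\,ds-2\xi_{2}\,dt$ on $\Omega$. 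So it will suffice to produce a primitive of $\omega$.

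Next I would translate the hypothesis into the closedness of $\omega$. From $\partial_{\bar z}=\tfrac12(\partial_{s}+i\partial_{t})$ one computes
\[
\partial_{\bar z}\xi \;=\; \tfrac12\big(\partial_{s}\xi_{1}-\partial_{t}\xi_{2}\big)\;+\;\tfrac{i}{2}\big(\partial_{s}\xi_{2}+\partial_{t}\xi_{1}\big),
\]
so the requirement $\partial_{\bar z}\xi(z)\in\mathbb{R}$ for every $z\in\Omega$ is exactly $\partial_{t}\xi_{1}+\partial_{s}\xi_{2}\equiv 0$ on $\Omega$. On the other hand,
\[
d\omega \;=\; \big(\partial_{s}(-2\xi_{2})-\partial_{t}(2\xi_{1})\big)\,ds\wedge dt \;=\; -2\big(\partial_{s}\xi_{2}+\partial_{t}\xi_{1}\big)\,ds\wedge dt .
\]
Hence the hypothesis says precisely that $d\omega=0$.

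Finally, since $\Omega$ is simply connected, the Poincar\'e lemma for $1$-forms gives a real-valued primitive: fixing $z_{0}\in\Omega$, I would set $x(z):=\int_{\gamma}\omega$ over any piecewise-$C^{1}$ path $\gamma$ in $\Omega$ from $z_{0}$ to $z$; this is independent of $\gamma$ because every loop in $\Omega$ is null-homotopic, so Green's theorem converts its $\omega$-integral into the area integral of $d\omega=0$. Then $dx=\omega$, that is $x_{s}=2\xi_{1}$ and $x_{t}=-2\xi_{2}$, whence $\partial_{z}x=\tfrac12(x_{s}-ix_{t})=\xi_{1}+i\xi_{2}=\xi$, as claimed.

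I do not expect a real obstacle here. The one genuine point is the observation that the pointwise condition ``$\partial_{\bar z}\xi$ is real'' is nothing other than the cross-derivative (integrability) condition for the overdetermined first-order system $x_{s}=2\xi_{1}$, $x_{t}=-2\xi_{2}$; once that is seen, the conclusion is the textbook Poincar\'e lemma, and simple connectivity is used only to upgrade a local primitive (which always exists) to a global one. The only thing demanding care is the bookkeeping of the factors of $2$ and the $i$'s in the Wirtinger calculus, together with the harmless implicit $C^{1}$ assumption on $\xi$.
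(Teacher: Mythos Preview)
Your argument is correct: the identification of the condition $\partial_{\bar z}\xi\in\mathbb{R}$ with the closedness of the real $1$-form $\omega=2\xi_{1}\,ds-2\xi_{2}\,dt$, followed by the classical Poincar\'e lemma on a simply connected domain, is exactly the right reduction, and your Wirtinger bookkeeping is accurate. Note, however, that the paper does not supply its own proof of this lemma at all---it is stated as a named classical fact and invoked without argument---so there is nothing to compare your approach against; your write-up simply fills in the standard justification that the paper takes for granted.
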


\begin{theorem}[\textbf{Correspondence from null curves in ${\mathbb{C}}^{4}$ to translators in ${\mathbb{R}}^{4}$}] \label{WE}
Let $\left({g}_{1}, g_{2}\right)$ be a pair of nowhere-holomorphic
${\mathcal{C}}^{2}$ functions from a simply connected domain
$\Omega \subset \mathbb{C}$ to the open unit disc $\mathbb{D}:=\{w
\in \mathbb{C} \; \vert \; \vert w \vert < 1 \}$ satisfying the
compatibility condition
\begin{equation} \label{CP}
{\mathcal{F}}:=\frac{{\left({g}_{1}\right)}_{\overline{z}}}{\left(1-g_{1}\overline{g_{2}}\right)\left(1+{\vert
g_{1}\vert}^{2}\right)}
=\frac{{\left({g}_{2}\right)}_{\overline{z}}}{\left(1-\overline{g_{1}}g_{2}\right)\left(1+{\vert
g_{2}\vert}^{2}\right)}, \quad z \in \Omega.
\end{equation}
We assume that one of the following two integrability conditions
holds on $\Omega$:
\begin{equation} \label{Ge1}
0= {\left({g}_{1}\right)}_{z \overline{z}} +
\left(\frac{\overline{g_{2}}}{1-g_{1}\overline{g_{2}}} -
\frac{\overline{g_{1}}}{1+{\vert g_{1}\vert}^{2}}\right)
{\left({g}_{1}\right)}_{z}{\left({g}_{1}\right)}_{\overline{z}}
+\frac{g_{1}+g_{2}}{\left(1-\overline{g_{1}}g_{2}\right)\left(1+{\vert
g_{1}\vert}^{2}\right)} {\vert
{\left({g}_{1}\right)}_{\overline{z}} \vert}^{2},
\end{equation}
\begin{equation} \label{Ge2}
0= {\left({g}_{2}\right)}_{z \overline{z}} +
\left(\frac{\overline{g_{1}}}{1-\overline{g_{1}}g_{2}} -
\frac{\overline{g_{2}}}{1+{\vert g_{2}\vert}^{2}}\right)
{\left({g}_{2}\right)}_{z}{\left({g}_{2}\right)}_{\overline{z}}
+\frac{g_{1}+g_{2}}{\left(1-g_{1}\overline{g_{2}}\right)\left(1+{\vert
g_{2}\vert}^{2}\right)} {\vert
{\left({g}_{2}\right)}_{\overline{z}} \vert}^{2}.
\end{equation}
Then, we obtain the following statements. \\
{\textbf{(a)}} Both (\ref{Ge1}) and (\ref{Ge2}) hold. (Assuming (\ref{CP}), we claim that (\ref{Ge1}) is equivalent to (\ref{Ge2}).)\\
{\textbf{(b)}} The complex curve $\phi:=\left({\phi}_{1},
{\phi}_{2}, {\phi}_{3}, {\phi}_{4} \right) : \Omega \rightarrow
{\mathbb{C}}^{4}$ defined by
\[
\phi=f \left(\, 1 + {g}_{1}{g}_{2}, i(1 - {g}_{1}{g}_{2}), {g}_{1}
- {g}_{2}, -i({g}_{1} + {g}_{2}) \, \right), \quad
f:=-2i{\overline{\mathcal{F}}}
\]
fulfills the three properties on the domain $\Omega$:
\begin{enumerate}
\item[\textbf{(b1)}] \textbf{nullity} \; $\phi \cdot
\phi={{\phi}_{1}}^{2}+{{\phi}_{2}}^{2}+{{\phi}_{3}}^{2}+{{\phi}_{4}}^{2}=0$,
\item[\textbf{(b2)}] \textbf{non-degeneracy} \; ${\vert \phi
\vert}^{2} ={\vert {\phi}_{1} \vert}^{2}+{\vert {\phi}_{2}
\vert}^{2}+{\vert {\phi}_{3} \vert}^{2}+{\vert {\phi}_{4}
\vert}^{2}>0$, \item[\textbf{(b3)}] \textbf{integrability} \;
$\frac{\partial {\phi} }{\partial \overline{z}}= \left(
\frac{\partial {\phi}_{1}}{\partial \overline{z}}, \frac{\partial
{\phi}_{2}}{\partial \overline{z}}, \frac{\partial
{\phi}_{3}}{\partial \overline{z}}, \frac{\partial
{\phi}_{4}}{\partial \overline{z}} \right) \in {\mathbb{R}}^{4}$.
\end{enumerate}
{\textbf{(c)}} Integrating the complex null immersion $\phi:
\Omega \rightarrow {\mathbb{C}}^{4}$ yields a translator $\Sigma$
in ${\mathbb{R}}^{4}$.
\begin{enumerate}
\item[\textbf{(c1)}] There exists a conformal immersion
${\mathbf{X}}=\left( x_{1}, x_{2}, x_{3}, x_{4} \right): \Omega
\rightarrow {\mathbb{R}}^{4}$ satisfying
\[
{\mathbf{X}}_{z} = \phi.
\]
\item[\textbf{(c2)}] The induced metric $ds^{2}$ on the $z$-domain
$\Omega$ by the immersion ${\mathbf{X}}$ reads
\[
ds^{2}= \frac{ 16 \, {\vert {\left({g}_{1}\right)}_{\overline{z}}
\vert}^{2} }{ {\vert 1 - g_{1} \overline{{g}_{2}} \vert}^{2} }
\cdot \frac{ 1+{\vert g_{2}\vert}^{2} }{ 1+{\vert g_{1}\vert}^{2}
} {\vert dz \vert}^{2} = \frac{ 16 \, {\vert
{\left({g}_{2}\right)}_{\overline{z}} \vert}^{2} }{ {\vert 1 -
\overline{{g}_{1}} g_{2} \vert}^{2} } \cdot \frac{ 1+{\vert
g_{1}\vert}^{2} }{ 1+{\vert g_{2}\vert}^{2} } {\vert dz
\vert}^{2}.
\]
\item[\textbf{(c3)}] The pair $\left({g}_{1}, g_{2}\right)$ is the
complexified Gauss map of the surface $\Sigma = {\mathbf{X}}\left(
\Omega \right)$. In other words, the generalized Gauss map of the
conformal immersion ${\mathbf{X}}$ reads
\[
[{\mathbf{X}}_{z} ] = \left[\, 1+{g}_{1}{g}_{2},
i(1-{g}_{1}{g}_{2}), {g}_{1}-{g}_{2},
-i\left({g}_{1}+{g}_{2}\right) \, \right] \in {\mathcal{Q}}_{2}
\subset {\mathbb{C}}{\mathbb{P}}^{3}.
\]
\item[\textbf{(c4)}] The surface $\Sigma$ becomes a translator
with the translating velocity $-\mathbf{e_{4}}=(0,0,0,-1)$.
\end{enumerate}
\end{theorem}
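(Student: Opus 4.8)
The plan is to verify everything about the explicit data $\phi=f\,\Psi$, where $\Psi:=\bigl(1+g_1g_2,\,i(1-g_1g_2),\,g_1-g_2,\,-i(g_1+g_2)\bigr)$ and $f=-2i\overline{\mathcal F}$. First I would record the facts that make the construction well-posed: since $g_1,g_2$ take values in $\mathbb D$, we have $|g_1\overline{g_2}|<1$ and $|\overline{g_1}g_2|<1$, so $1-g_1\overline{g_2}$ and $1-\overline{g_1}g_2$ never vanish, while $1+|g_j|^2\geq 1$; hence $\mathcal F$ and $f$ are well-defined on $\Omega$, and since $g_1$ is nowhere holomorphic, $(g_1)_{\overline z}\neq 0$, so $\mathcal F\neq 0$ and $f\neq 0$ throughout $\Omega$. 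Parts \textbf{(b1)} and \textbf{(b2)} are then one-line algebra: $\Psi\cdot\Psi=0$ and $|\Psi|^2=2(1+|g_1|^2)(1+|g_2|^2)$, so $\phi\cdot\phi=0$ and $|\phi|^2=2|f|^2(1+|g_1|^2)(1+|g_2|^2)>0$.

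The core of the theorem is parts \textbf{(a)} and \textbf{(b3)}. Instead of differentiating the four components of $\phi$ one at a time, I would pass to the combinations
\[
\phi_1-i\phi_2=2f,\quad \phi_1+i\phi_2=2fg_1g_2,\quad \phi_3+i\phi_4=2fg_1,\quad \phi_3-i\phi_4=-2fg_2 .
\]
Applying $\partial_{\overline z}$ and demanding $(\phi_k)_{\overline z}\in\mathbb R$, one checks that \textbf{(b3)} is equivalent to the two complex identities $f_{\overline z}=\bigl(\overline f\,\overline{g_1}\,\overline{g_2}\bigr)_z$ and $(fg_1)_{\overline z}=-\bigl(\overline f\,\overline{g_2}\bigr)_z$. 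Substituting $\overline{\mathcal F}=\tfrac i2 f$ (equivalently $\mathcal F=-\tfrac i2\overline f$) together with $(g_1)_{\overline z}=\mathcal F(1-g_1\overline{g_2})(1+|g_1|^2)$ and $(g_2)_{\overline z}=\mathcal F(1-\overline{g_1}g_2)(1+|g_2|^2)$ from \eqref{CP}, and expanding with the help of $\overline{(g_j)_{\overline z}}=(\overline{g_j})_z$, a long but purely mechanical computation reduces this pair of identities, modulo \eqref{CP}, to \eqref{Ge1}. The same computation with $g_1$ and $g_2$ interchanged — which amounts to the harmless sign change $\phi_3\mapsto-\phi_3$ and leaves \eqref{CP} and \textbf{(b3)} unchanged — reduces it instead to \eqref{Ge2}. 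Hence, under \eqref{CP}, the conditions \eqref{Ge1} and \eqref{Ge2} are equivalent, which is \textbf{(a)}, and since one of them is assumed, \textbf{(b3)} follows. (Part \textbf{(a)} may also be seen directly, by differentiating \eqref{CP} in $z$ and eliminating $(g_2)_{z\overline z}$ via \eqref{Ge2}.) Carrying out this expansion cleanly is the single genuinely laborious step; everything else is formal.

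\emph{Parts \textbf{(c1)}--\textbf{(c3)}.} By \textbf{(b3)} each $\phi_k$ satisfies $(\phi_k)_{\overline z}\in\mathbb R$, so Lemma~\ref{PL} produces real functions $x_k$ on $\Omega$ with $(x_k)_z=\phi_k$; putting $\mathbf X=(x_1,x_2,x_3,x_4)$ gives $\mathbf X_z=\phi$, while $\mathbf X_z\cdot\mathbf X_z=0$ and $\mathbf X_z\cdot\overline{\mathbf X_z}=|\phi|^2>0$ from \textbf{(b1)}--\textbf{(b2)} say precisely that $\mathbf X$ is a conformal immersion, giving \textbf{(c1)}. For \textbf{(c2)}, conformality and \textbf{(b1)} give $ds^2=|\mathbf X_u|^2|dz|^2=2|\phi|^2|dz|^2=4|f|^2(1+|g_1|^2)(1+|g_2|^2)|dz|^2$, and inserting $|f|^2=4|\mathcal F|^2$ with, in turn, the two forms of $\mathcal F$ from \eqref{CP} yields the two displayed formulas for $ds^2$. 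For \textbf{(c3)}, since $f$ is a nonvanishing scalar, $[\mathbf X_z]=[\phi]=[\Psi]$ in $\mathbb{CP}^{3}$, which is exactly the assertion that the complexified Gauss map of $\mathbf X$ is $(g_1,g_2)$.

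\emph{Part \textbf{(c4)}.} Because $\mathbf X$ is conformal, the mean curvature vector of $\Sigma$ is $\vec H=\tfrac{2}{|\phi|^2}\mathbf X_{z\overline z}=\tfrac{2}{|\phi|^2}\phi_{\overline z}$, which by \textbf{(b3)} and nullity is real and normal to $\Sigma$. The surface $\Sigma$ is a translator with translating velocity $-\mathbf e_4$ exactly when $\vec H=(-\mathbf e_4)^{\perp}$; using the conformal-coordinate identity $(\mathbf e_4)^{\top}=\tfrac{1}{|\phi|^2}\bigl(\overline{\phi_4}\,\phi+\phi_4\,\overline\phi\bigr)$ with $\phi_4=-if(g_1+g_2)$, this is the single vector equation
\[
2\,\phi_{\overline z}=-|\phi|^2\,\mathbf e_4+\overline{\phi_4}\,\phi+\phi_4\,\overline\phi .
\]
I would check it by computing $\phi_{\overline z}=f_{\overline z}\Psi+f\Psi_{\overline z}$ from the data already assembled for \textbf{(b3)}, substituting $f=-2i\overline{\mathcal F}$ and the \eqref{CP} formulas for $(g_j)_{\overline z}$, and comparing the four components; the precise normalization $f=-2i\overline{\mathcal F}$ is exactly what makes the right-hand side appear. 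Conceptually this is Ilmanen's correspondence written in coordinates: a translator with velocity $-\mathbf e_4$ in $\mathbb R^4$ is the same as a minimal surface for the conformally flat metric $e^{-x_4}\,\delta$, equivalently a critical point of the weighted area $\int_\Sigma e^{-x_4}\,dA$, whose Euler--Lagrange equation is precisely $\vec H=(-\mathbf e_4)^{\perp}$.
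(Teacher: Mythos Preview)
Your proposal is correct, and for parts \textbf{(b1)}, \textbf{(b2)}, \textbf{(c1)}--\textbf{(c4)} it follows the same route as the paper: compute $|\phi|^2=2|f|^2(1+|g_1|^2)(1+|g_2|^2)$, invoke Poincar\'e's lemma, read off the metric, and verify $\mathcal{H}=(-\mathbf e_4)^{\perp}$ by comparing the explicit expressions obtained from $\phi_{\overline z}$.

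The genuine difference lies in how you handle \textbf{(a)} and \textbf{(b3)}. The paper proves \textbf{(a)} \emph{first}, directly: it computes $\mathcal F_z/\mathcal F$ twice, once from each side of \eqref{CP}, obtaining
\[
\frac{\mathcal F_z}{\mathcal F}=\frac{\mathcal L}{(g_1)_{\overline z}}-\overline{\mathcal F}\,\Gamma
=\frac{\mathcal R}{(g_2)_{\overline z}}-\overline{\mathcal F}\,\Gamma,
\qquad \Gamma:=g_1(1-|g_2|^2)+g_2(1-|g_1|^2),
\]
so that $\mathcal L=0\Leftrightarrow\mathcal R=0$. Only then does it derive the closed formulas for $f_{\overline z}$, $(fg_1)_{\overline z}$, $(fg_2)_{\overline z}$, $(fg_1g_2)_{\overline z}$ and read off \textbf{(b3)}. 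You instead package \textbf{(b3)} as the two complex identities $f_{\overline z}=(\overline f\,\overline{g_1}\,\overline{g_2})_z$ and $(fg_1)_{\overline z}=-(\overline f\,\overline{g_2})_z$, reduce them (under \eqref{CP}) to \eqref{Ge1}, and then use the $g_1\leftrightarrow g_2$ symmetry to get \eqref{Ge2}, proving \textbf{(a)} and \textbf{(b3)} in one stroke. Your organization is more economical and makes the role of the symmetry transparent; the paper's approach has the advantage that the single scalar $\mathcal F_z/\mathcal F$ carries all the second-order information, so the explicit formulas for $(\phi_k)_{\overline z}$ drop out with no case analysis. Your parenthetical ``differentiate \eqref{CP} in $z$'' is in fact exactly the paper's Step~A. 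One small sharpening: in your reduction, the \emph{first} identity $f_{\overline z}=(\overline f\,\overline{g_1}\,\overline{g_2})_z$ alone already determines $\mathcal F_z$ uniquely (since $|g_1g_2|<1$), hence is by itself equivalent to \eqref{Ge1}; this makes the symmetry argument for \textbf{(a)} cleaner than appealing to the full pair.
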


\begin{proof} \textbf{Step A.} For the proof of \textbf{(a)}, we first set up the notations
\[
\begin{cases}
{\mathcal{L}}:= {\left({g}_{1}\right)}_{z \overline{z}} +
\left(\frac{\overline{g_{2}}}{1-g_{1}\overline{g_{2}}} -
\frac{\overline{g_{1}}}{1+{\vert g_{1}\vert}^{2}}\right)
{\left({g}_{1}\right)}_{z}{\left({g}_{1}\right)}_{\overline{z}}
+\frac{g_{1}+g_{2}}{\left(1-\overline{g_{1}}g_{2}\right)\left(1+{\vert g_{1}\vert}^{2}\right)} {\vert {\left({g}_{1}\right)}_{\overline{z}} \vert}^{2}, \\
{\mathcal{R}}:={\left({g}_{2}\right)}_{z \overline{z}} +
\left(\frac{\overline{g_{1}}}{1-\overline{g_{1}}g_{2}} -
\frac{\overline{g_{2}}}{1+{\vert g_{2}\vert}^{2}}\right)
{\left({g}_{2}\right)}_{z}{\left({g}_{2}\right)}_{\overline{z}}
+\frac{g_{1}+g_{2}}{\left(1-g_{1}\overline{g_{2}}\right)\left(1+{\vert
g_{2}\vert}^{2}\right)} {\vert
{\left({g}_{2}\right)}_{\overline{z}} \vert}^{2}.
\end{cases}
\]
We first assume only (\ref{CP}). Taking the conjugation in
(\ref{CP}) yields
\[
\overline{\mathcal{F}}= \frac{ {(\overline{g_{2}})}_{z} } { (1 -
g_{1}\overline{g_{2}}) \left(1+{\vert g_{2}\vert}^{2}\right)} =
\frac{ {(\overline{g_{1}})}_{z} } { (1 - \overline{g_{1}} g_{2})
)\left(1+{\vert g_{1}\vert}^{2}\right) }.
\]
Taking into account this, we deduce
\begin{eqnarray*}
\frac{{\mathcal{F}}_{z}}{{\mathcal{F}}} &=& \frac{{(g_{1})}_{z
\overline{z}}}{{(g_{1})}_{\overline{z}}} +
\left(\frac{\overline{g_{2}}}{1-g_{1}\overline{g_{2}}} -
\frac{\overline{g_{1}}}{1+{\vert g_{1}\vert}^{2}}\right)
{\left({g}_{1}\right)}_{z}
+ \left( \frac{ 1+{\vert g_{2}\vert}^{2} }{1- \overline{g_{1}}g_{2}} - 1 \right) \cdot \frac{g_{1}}{ 1+{\vert g_{1}\vert}^{2} } {(\overline{g_{1}})}_{z} \\
&=& \frac{{\mathcal{L}}}{{(g_{1})}_{\overline{z}}} - \overline{F}
\left[ g_{1} \left( 1-{\vert g_{2}\vert}^{2} \right) + g_{2}
\left( 1-{\vert g_{1}\vert}^{2} \right) \right]
\end{eqnarray*}
and
\begin{eqnarray*}
\frac{{\mathcal{F}}_{z}}{{\mathcal{F}}} &=& \frac{{(g_{2})}_{z
\overline{z}}}{{(g_{2})}_{\overline{z}}} +
\left(\frac{\overline{g_{1}}}{1-g_{2}\overline{g_{2}}} -
\frac{\overline{g_{2}}}{1+{\vert g_{2}\vert}^{2}}\right)
{\left({g}_{2}\right)}_{z}
+ \left( \frac{ 1+{\vert g_{1}\vert}^{2} }{1- g_{1} \overline{g_{2}}} - 1 \right) \cdot \frac{g_{2}}{ 1+{\vert g_{2}\vert}^{2} } {(\overline{g_{2}})}_{z} \\
&=& \frac{{\mathcal{R}}}{{(g_{2})}_{\overline{z}}} - \overline{F}
\left[ g_{1} \left( 1-{\vert g_{2}\vert}^{2} \right) + g_{2}
\left( 1-{\vert g_{1}\vert}^{2} \right) \right].
\end{eqnarray*}
These two equalities thus show the equality
\[
\frac{{\mathcal{L}}}{{(g_{1})}_{\overline{z}}} =
\frac{{\mathcal{R}}}{{(g_{2})}_{\overline{z}}},
\]
which means the desired implications: (\ref{Ge1})
$\Longleftrightarrow \mathcal{L}=0
\Longleftrightarrow \mathcal{R}=0 \Longleftrightarrow $ (\ref{Ge2}). \\
\textbf{Step B.} We deduce several equalities which will be used
in the proof of \textbf{(b)} and \textbf{(c)}. According to
\textbf{(a)}, from now on, we assume that both (\ref{Ge1}) and
(\ref{Ge2}) hold. Since both $\mathcal{L}$ and $\mathcal{R}$
vanish, the previous equalities imply
\[
{\mathcal{F}}_{z} = - {\vert \mathcal{F} \vert}^{2} \left[ g_{1}
\left( 1-{\vert g_{2}\vert}^{2} \right) + g_{2} \left( 1-{\vert
g_{1}\vert}^{2} \right) \right].
\]
Conjugating this and using the definition
$f=-2i{\overline{\mathcal{F}}}$, we arrive at the equality
\begin{equation} \label{Tq1}
{f}_{\overline{z}} = \frac{i}{2} {\vert f \vert}^{2} \left[
\overline{g_{1}} \left( 1-{\vert g_{2}\vert}^{2} \right) +
\overline{g_{2}} \left( 1-{\vert g_{1}\vert}^{2} \right) \right].
\end{equation}
The compatibility condition (\ref{CP}) can be written in terms of
$f=-2i{\overline{\mathcal{F}}}$:
\begin{equation} \label{CPf}
\overline{f}
=\frac{2i{\left({g}_{1}\right)}_{\overline{z}}}{\left(1-g_{1}\overline{g_{2}}\right)\left(1+{\vert
g_{1}\vert}^{2}\right)}
=\frac{2i{\left({g}_{2}\right)}_{\overline{z}}}{\left(1-\overline{g_{1}}g_{2}\right)\left(1+{\vert
g_{2}\vert}^{2}\right)}.
\end{equation}
It immediately follows from (\ref{Tq1}) and (\ref{CPf}) that
\begin{equation} \label{Tq2a}
{\left(fg_{1} \right)}_{\overline{z}} = {f}_{\overline{z}} g_{1} +
{\left(g_{1} \right)}_{\overline{z}} f = - \frac{i}{2} {\vert f
\vert}^{2} \left( 1 - 2 g_{1} \overline{g_{2}} + {\vert
g_{1}\vert}^{2} {\vert g_{2}\vert}^{2} \right)
\end{equation}
and
\begin{equation} \label{Tq2b}
{\left(fg_{2} \right)}_{\overline{z}} ={f}_{\overline{z}} g_{2} +
{\left(g_{2} \right)}_{\overline{z}} f = - \frac{i}{2} {\vert f
\vert}^{2} \left( 1 - 2 \overline{g_{1}} g_{2} + {\vert
g_{1}\vert}^{2} {\vert g_{2}\vert}^{2} \right).
\end{equation}
Another computation taking into account (\ref{CPf}) and
(\ref{Tq2a}) shows
\begin{equation} \label{Tq3}
{\left(fg_{1}g_{2} \right)}_{\overline{z}} = {\left(fg_{1}
\right)}_{\overline{z}} g_{2} + {\left(g_{2}
\right)}_{\overline{z}} fg_{1} = - \frac{i}{2} {\vert f \vert}^{2}
\left[ {g_{1}} \left( 1-{\vert g_{2}\vert}^{2} \right) + {g_{2}}
\left( 1-{\vert g_{1}\vert}^{2} \right) \right].
\end{equation}
\textbf{Step C.} Our aim here is to establish the claims in
\textbf{(b)} on the complex curve
\[
\phi=\left({\phi}_{1}, {\phi}_{2}, {\phi}_{3}, {\phi}_{4}
\right)=f \left(\, 1 + {g}_{1}{g}_{2}, i(1 - {g}_{1}{g}_{2}),
{g}_{1} - {g}_{2}, -i({g}_{1} + {g}_{2}) \, \right).
\]
First, the equality in \textbf{(b1)} is obvious. Next, by the
assumptions on $g_{1}$ and $g_{2}$, we see that
$f=-2i{\overline{\mathcal{F}}}$ never vanish. Then, the assertion
\textbf{(b2)} follows from the equality
\begin{equation} \label{met1}
{\vert {\phi}_{1} \vert}^{2}+{\vert {\phi}_{2} \vert}^{2}+{\vert
{\phi}_{3} \vert}^{2}+{\vert {\phi}_{4} \vert}^{2} = 2 {\vert f
\vert}^{2} \left( 1 + {\vert g_{1}\vert}^{2} \right) \left( 1 +
{\vert g_{2}\vert}^{2} \right).
\end{equation}
We employ the equalities in \textbf{Step B} to show the assertion
\textbf{(b3)}. Joining the equalities in (\ref{Tq1}),
(\ref{Tq2a}), (\ref{Tq2b}), and (\ref{Tq3}) and the definition of
$\phi$, we reach
\begin{equation} \label{tqs}
\begin{cases}
{\left({\phi}_{1}\right)}_{\overline{z}} = \;\;\, {\vert f
\vert}^{2} \left[ \left( 1 - {\vert g_{2} \vert}^{2} \right)
\mathrm{Im }\,g_{1}
+ \left( 1 - {\vert g_{1} \vert}^{2} \right) \mathrm{Im }\,g_{2} \right], \\
{\left({\phi}_{2}\right)}_{\overline{z}} = -{\vert f \vert}^{2}
\left[ \left( 1 - {\vert g_{2} \vert}^{2} \right) \mathrm{Re
}\,g_{1}
+ \left( 1 - {\vert g_{1} \vert}^{2} \right) \mathrm{Re }\,g_{2} \right], \\
{\left({\phi}_{3}\right)}_{\overline{z}}
= \, 2 {\vert f \vert}^{2} \; \mathrm{Im }\, \left( \overline{g_{1}} g_{2} \right), \\
{\left({\phi}_{4}\right)}_{\overline{z}} = -{\vert f \vert}^{2}
\left[ 1 - 2 \mathrm{Re }\, \left( \overline{g_{1}} g_{2} \right)
+ {\vert g_{1} \vert}^{2} {\vert g_{2} \vert}^{2} \right].
\end{cases}
\end{equation}
These four equalities guarantee the integrability condition
$\left( \frac{\partial {\phi}_{1}}{\partial \overline{z}}, \frac{\partial {\phi}_{2}}{\partial \overline{z}}, \frac{\partial {\phi}_{3}}{\partial \overline{z}}, \frac{\partial {\phi}_{4}}{\partial \overline{z}} \right) \in {\mathbb{R}}^{4}$.\\
\textbf{Step D.} We prove the claims \textbf{(c1)}, \textbf{(c2)},
and \textbf{(c3)}. Thanks to \textbf{(b3)}, we can integrate the
curve $\phi$. Since $\Omega$ is simply connected, applying Lemma
\ref{PL} to the complex curve $\phi$, we see the existence of the
function $ {\mathbf{X}}=\left( x_{1}, x_{2}, x_{3}, x_{4} \right):
\Omega \rightarrow {\mathbb{R}}^{4}$ satisfying
\[
{\mathbf{X}}_{z} = \phi=f \left(\, 1 + {g}_{1}{g}_{2}, i(1 -
{g}_{1}{g}_{2}), {g}_{1} - {g}_{2}, -i({g}_{1} + {g}_{2}) \,
\right).
\]
This and the nullity of $\phi$ guarantee that the mapping $
{\mathbf{X}}$ is conformal. Using (\ref{met1}), one then find that
the induced metric $ds^{2}= {\Lambda}^{2} {\vert dz\vert}^{2}$ by
the immersion ${\mathbf{X}}$ reads
\begin{equation} \label{met2}
ds^{2}= {\Lambda}^{2} {\vert dz\vert}^{2}= 4 {\vert f \vert}^{2}
\left( 1 + {\vert g_{1}\vert}^{2} \right) \left( 1 + {\vert
g_{2}\vert}^{2} \right) {\vert dz\vert}^{2}.
\end{equation}
Since $f$ never vanish, this completes the proof of \textbf{(c1)}.
Also, joining (\ref{CPf}) and (\ref{met2}) imply the equality in
\textbf{(c2)}. The integrability ${\mathbf{X}}_{z} = \phi$ and the
definition of $\phi$ gives
\[
[ {\mathbf{X}}_{z} ] = \left[ 1+{g}_{1}{g}_{2},
i(1-{g}_{1}{g}_{2}), {g}_{1}-{g}_{2},
-i\left({g}_{1}+{g}_{2}\right)\right],
\]
which completes the proof of \textbf{(c3)}. \\
\textbf{Step E.} Finally, we prove the claim \textbf{(c4)}. First,
we find the normal component of the vector field
$-\mathbf{e_{4}}=(0,0,0,-1)$ in terms of $g_{1}$ and $g_{2}$. We
compute
\begin{eqnarray*}
{\left(-\mathbf{e_{4}}\right)}^{\perp} &=& -\mathbf{e_{4}} -
\left[ \left( \frac{ {\mathbf{X}}_{u}}{\Lambda} \cdot
\left(-\mathbf{e_{4}} \right) \right) \frac{
{\mathbf{X}}_{u}}{\Lambda}
+ \left( \frac{ {\mathbf{X}}_{v}}{\Lambda} \cdot \left(-\mathbf{e_{4}} \right) \right) \frac{ {\mathbf{X}}_{v}}{\Lambda} \right] \\
&=& -\mathbf{e_{4}} + \frac{2}{{\Lambda}^{2}} \left[
\left( {\mathbf{X}}_{\overline{z}} \cdot \mathbf{e_{4}} \right) {\mathbf{X}}_{z} + \left( {\mathbf{X}}_{z} \cdot \mathbf{e_{4}} \right) {\mathbf{X}}_{\overline{z}} \right] \\
&=& \begin{bmatrix}
0 \\
0\\
0 \\
-1
\end{bmatrix}
+ \frac{4}{{\Lambda}^{2}}
\begin{bmatrix}
\mathrm{Re }\, \left( {\phi}_{1} \overline{{\phi}_{4}} \right) \\
\mathrm{Re }\, \left( {\phi}_{2} \overline{{\phi}_{4}} \right) \\
\mathrm{Re }\, \left( {\phi}_{3} \overline{{\phi}_{4}} \right) \\
{\vert {\phi}_{4} \vert}^{2}.
\end{bmatrix}.
\end{eqnarray*}
Combining this,  (\ref{tqs}), and (\ref{met2}) yields
\[
{\left(-\mathbf{e_{4}}\right)}^{\perp}= \frac{1}{\left( 1 + {\vert
g_{1}\vert}^{2} \right) \left( 1 + {\vert g_{2}\vert}^{2} \right)}
\begin{bmatrix}
\;\, \left[ \left( 1 - {\vert g_{2} \vert}^{2} \right) \mathrm{Im
}\,g_{1}
+ \left( 1 - {\vert g_{1} \vert}^{2} \right) \mathrm{Im }\,g_{2} \right] \\
-  \left[ \left( 1 - {\vert g_{2} \vert}^{2} \right) \mathrm{Re
}\,g_{1}
+ \left( 1 - {\vert g_{1} \vert}^{2} \right) \mathrm{Re }\,g_{2} \right] \\
2   \; \mathrm{Im }\, \left( \overline{g_{1}} g_{2} \right) \\
-  \left[ 1 - 2 \mathrm{Re }\, \left( \overline{g_{1}} g_{2}
\right) + {\vert g_{1} \vert}^{2} {\vert g_{2} \vert}^{2} \right]
\end{bmatrix}.
\]
Second, we find the mean curvature vector
$\mathcal{H}={\triangle}_{ds^{2}} {\mathbf{X}}= \frac{4}{
{\Lambda}^{2} } \frac{\partial }{\partial \overline{z}} \left(
\frac{\partial }{\partial z} {\mathbf{X}} \right) = \frac{4}{
{\Lambda}^{2} } {\phi}_{\overline{z}}$ on our surface
$\Sigma=\mathbf{X}\left( \Omega \right)$. Now, joining this,
(\ref{tqs}), and (\ref{met2}), we can write the mean curvature
vector $\mathcal{H}$ in terms of $g_{1}$ and $g_{2}$:
\[
\mathcal{H}= \frac{1}{\left( 1 + {\vert g_{1}\vert}^{2} \right)
\left( 1 + {\vert g_{2}\vert}^{2} \right)}
\begin{bmatrix}
\;\,  \left[ \left( 1 - {\vert g_{2} \vert}^{2} \right) \mathrm{Im
}\,g_{1}
+ \left( 1 - {\vert g_{1} \vert}^{2} \right) \mathrm{Im }\,g_{2} \right] \\
-  \left[ \left( 1 - {\vert g_{2} \vert}^{2} \right) \mathrm{Re
}\,g_{1}
+ \left( 1 - {\vert g_{1} \vert}^{2} \right) \mathrm{Re }\,g_{2} \right] \\
2   \; \mathrm{Im } \, \left( \overline{g_{1}} g_{2} \right) \\
-  \left[ 1 - 2 \mathrm{Re }\, \left( \overline{g_{1}} g_{2}
\right) + {\vert g_{1} \vert}^{2} {\vert g_{2} \vert}^{2} \right]
\end{bmatrix}.
\]
We therefore conclude that
$\mathcal{H}={\left(-\mathbf{e_{4}}\right)}^{\perp}$.
\end{proof}

\begin{remark}[\textbf{Ilmanen's correspondence}] \label{Ilc} Theorem \ref{WE} generalizes the classical Weierstrass construction from holomorphic null immersions in ${\mathbb{C}}^{3}$ to conformal minimal immersions in ${\mathbb{R}}^{3}$. The key ingredient behind Theorem \ref{WE} is the Ilmanen correspondence between translators and minimal surfaces. (See
\cite{Il94} and \cite{Wh11}.) We deform the flat metric of
${\mathbb{R}}^{4}$ conformally to introduce the four dimensional
Riemannian manifold
\[
{\mathcal{I}}^{4}=\left(\, {\mathbb{R}}^{4}, \,
e^{-x_{4}}\left({dx_{1}}^{2}+
{dx_{2}}^{2}+{dx_{3}}^{2}+{dx_{4}}^{2}\right) \, \right).
\]
Any conformal immersion ${\mathbf{X}}: \Omega \rightarrow
{\mathbb{R}}^{4}$ of a downward translator with the translating
velocity $-\mathbf{e_{4}}=(0,0,0,-1)$ in Euclidean space
${\mathbb{R}}^{4}$ then can be identified as a conformal minimal
immersion ${\mathbf{X}}: \Omega \rightarrow {\mathcal{I}}^{4}$.
However, it is not easy to find Riemannian manifolds which admit
explicit representations for their minimal surfaces.
\end{remark}

\begin{example}[\textbf{The Hamiltonian stationary Lagrangian translator in ${\mathbb{C}}^{2}$}]
 Recently, interesting Lagrangian translators in the complex plane ${\mathbb{C}}^{2}$ are discovered in \cite{CL10, JLT10, Na11}.
 In 2010, Castro and Lerma \cite[Corollary 2]{CL10} classified all Hamiltonian stationary Lagrangian
  translators in ${\mathbb{C}}^{2}$. Locally, they are unique up to dilations (except
 for the totally geodesic ones) \cite[Corollary 3]{CL10}. The point of this example is to explicitly recover
 the Hoffman-Osserman Gauss map of the Castro-Lerma translator in $\mathbb{R}^{4}={\mathbb{C}}^{2}$.

 We first notice that Theorem \ref{WE} still holds when we regard the prescribed Gauss map $\left(g_{1}, g_{2}\right)$ as
 a pair of functions from a simply connected domain $\Omega$ to the complex plane (not just the unit disc). However, in this
 case, the induced mapping ${\mathbf{X}}: \Omega \rightarrow {\mathbb{R}}^{4}$ of the translator may admit the branch points
 where $\overline{g_{1}}g_{2}=1$ (or equivalently, $g_{1}\overline{g_{2}}=1$).

 Imposing the additional condition $\vert g_{1} \vert=1$ produces Lagrangian translators with the
  velocity $-\mathbf{e_{4}}=\left( 0,0,0,-1 \right)$. Then, our integrability condition
  in \textbf{(c1)}  for downward translators can be re-written as
\[
 {\mathbf{X}}_{z} = \left( \, {(x_{1})}_{z}, {(x_{2})}_{z}, {(x_{3})}_{z}, {(x_{4})}_{z} \, \right)
 = - {\theta}_{z} \left(\,\frac{1+g_{1}g_{2}}{g_{1}-g_{2}}, \,i \frac{1-g_{1}g_{2}}{g_{1}-g_{2}},
  \,1, \,- i \frac{g_{1}+ g_{2}}{g_{1}-g_{2}}\, \right),
\]
 where $\theta$ denotes the Lagrangian angle with $ig_{1}=e^{i\theta}$. The third terms ${(x_{3})}_{z}=-{\theta}_{z}$ can be
 viewed as \cite[Proposition 1]{CL10}, \cite[Proposition 2.5]{JLT10} and \cite[Proposition 2.1]{NT07}.

 We consider a complexified Gauss map of the form, for some $\mathbb{R}$-valued function $\mathcal{G}$,
\[
 \left(\, g_{1}(z), \, g_{2}(z) \, \right) = \left(\, e^{iv}, \, \mathcal{G}(u) e^{iv}
 \, \right),\quad z=u+iv \in \mathbb{R}+i\mathbb{R}
\]
 and want to solve the system (\ref{CP}) and (\ref{Ge1}). First, the compatibility condition (\ref{CP}) induces the
 ordinary differential equation
 \[
 \frac{1}{2} = \frac{1}{1+{\mathcal{G}}^{2}} \left( \mathcal{G} - \frac{d\mathcal{G}}{du} \right),
 \]
  and a canonical solution is given by $\mathcal{G}(u)=\frac{u+1}{u-1}$. One can easily check that
 \[
  \left(\, g_{1}(z), \, g_{2}(z) \, \right) = \left(\, e^{iv}, \, \mathcal{G}(u) e^{iv} \, \right)= \left( e^{iv}, \frac{u+1}{u-1} e^{iv} \right)
 \]
  satisfies the integrability condition (\ref{Ge1}). Then, the induced Lagrangian translator $\Sigma$ with
  the velocity $-\mathbf{e_{4}}$ admits the conformal parametrization
\[
 {\mathbf{X}}(u, v) = \left(    u \sin v, - u \cos v, -v, - \frac{1}{2} u^2 \right).
\]
 Since the induced metric  on $\Sigma$ reads $ds^2 = \left( 1+ u^2 \right) \left( du^2 + dv^2 \right)$,
 the  Lagrangian angle function  $\theta(u,v) = \frac{\pi}{2} + v$ with $ig_{1}=e^{i\theta}$ is harmonic on $\Sigma$.
 We find that this Hamiltonian stationary Lagrangian translator $\Sigma$ with the velocity $\left( 0,0,0,-1 \right)$ coincides with the
 Castro-Lerma translator \cite[Corollary 2]{CL10} with the velocity $\left(1,0,0,0 \right)$ by a suitable change of coordinates.
\end{example}

\begin{theorem}[\textbf{Correspondence from null curves in ${\mathbb{C}}^{3}$ to translators in ${\mathbb{R}}^{3}$}] \label{WEIER}
When a nowhere-holomorphic ${\mathcal{C}}^{2}$ function $G:\Omega
\rightarrow \mathbb{D}$ from a simply connected domain $\Omega
\subset \mathbb{C}$ to the open unit disc $\mathbb{D}:=\{w \in
\mathbb{C} \; \vert \; \vert w \vert < 1 \}$ satisfying the
translator equation
\begin{equation} \label{Gauss}
{G}_{z \overline{z}} + 2 \frac{ \overline{G} \, {\vert G
\vert}^{2}}{ 1 -{\vert G \vert}^{4} } G_{z} G_{\overline{z}} + 2
\frac{ G }{ 1 -{\vert G \vert}^{4}} {\vert G_{\overline{z}}
\vert}^{2}=0, \quad z \in \Omega,
\end{equation}
we associate a complex curve
$\phi={\phi}_{{}_{G}}=\left({\phi}_{1}, {\phi}_{2},
{\phi}_{3}\right) : \Omega \rightarrow {\mathbb{C}}^{3}$ as
follows:
\[
\phi = \frac{2 {\overline{G}}_{z} }{ {\vert G \vert}^{4} -1 }
\left( 1-{G}^{2}, i \left( 1 + G^{2} \right), 2 G \right).
\]
{\textbf{(a)}} Then, the complex curve $\phi$ fulfills the three
properties on the domain $\Omega$:
\begin{enumerate}
\item[\textbf{(a1)}] \textbf{nullity} \; $\phi \cdot
\phi={{\phi}_{1}}^{2} + {{\phi}_{2}}^{2} +{{\phi}_{3}}^{2}=0$,
\item[\textbf{(a2)}] \textbf{non-degeneracy} \; ${\vert \phi
\vert}^{2} ={\vert {\phi}_{1} \vert}^{2}+{\vert {\phi}_{2}
\vert}^{2}+{\vert {\phi}_{3} \vert}^{2}>0$, \item[\textbf{(a3)}]
\textbf{integrability} \; $\frac{\partial {\phi} }{\partial
\overline{z}}= \left( \frac{\partial {\phi}_{1}}{\partial
\overline{z}}, \frac{\partial {\phi}_{2}}{\partial \overline{z}},
\frac{\partial {\phi}_{3}}{\partial \overline{z}} \right) \in
{\mathbb{R}}^{3}$.
\end{enumerate}
{\textbf{(b)}} Also, integrating ${\mathbf{X}}_{z} = \phi$ on
$\Omega$ yields a downward translator $\Sigma =\mathbf{X}\left(
\Omega \right)$  with the velocity $-\mathbf{e_{3}}=(0,0,-1)$ in
${\mathbb{R}}^{3}$.  The prescribed map $G$ becomes the
complexified Gauss map of the induced surface $\Sigma
=\mathbf{X}\left( \Omega \right)$ via the stereographic projection
from the north pole.  The induced metric $ds^{2}$ by the immersion
${\mathbf{X}}$ reads $ds^{2}= \frac{ 16 \, {\vert
{G}_{\overline{z}} \vert}^{2} }{ {\left( {\vert G \vert}^{2} - 1
\right)}^{2} }{\vert dz \vert}^{2}$.
\end{theorem}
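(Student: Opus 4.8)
The plan is to obtain Theorem \ref{WEIER} as the \emph{diagonal specialization} $g_{1}=g_{2}=G$ of Theorem \ref{WE}, so that no genuinely new computation is required. First I would check that the pair $(g_{1},g_{2})=(G,G)$ satisfies the hypotheses of Theorem \ref{WE}. The compatibility condition (\ref{CP}) becomes automatic, since both of its sides collapse to
\[
\mathcal{F}=\frac{G_{\overline{z}}}{\left(1-|G|^{2}\right)\left(1+|G|^{2}\right)}=\frac{G_{\overline{z}}}{1-|G|^{4}}.
\]
The two integrability conditions (\ref{Ge1}) and (\ref{Ge2}) coincide (the data is symmetric under $g_{1}\leftrightarrow g_{2}$), and after the partial fraction simplifications
\[
\frac{\overline{G}}{1-|G|^{2}}-\frac{\overline{G}}{1+|G|^{2}}=\frac{2\,\overline{G}\,|G|^{2}}{1-|G|^{4}},\qquad \frac{2G}{\left(1-|G|^{2}\right)\left(1+|G|^{2}\right)}=\frac{2G}{1-|G|^{4}},
\]
the common equation is exactly the translator equation (\ref{Gauss}). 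Hence $(G,G)$ is admissible precisely when (\ref{Gauss}) holds, and Theorem \ref{WE}\textbf{(a)} gives us the required integrability for free.

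Next I would feed $(g_{1},g_{2})=(G,G)$ into the conclusion of Theorem \ref{WE}. Here $f=-2i\,\overline{\mathcal{F}}=\dfrac{2i\,\overline{G}_{z}}{|G|^{4}-1}$, and the $\mathbb{C}^{4}$ null curve produced by Theorem \ref{WE} is $\phi^{(4)}=f\left(1+G^{2},\ i(1-G^{2}),\ 0,\ -2iG\right)$, whose third component vanishes identically. By \textbf{(c1)} the induced conformal immersion $\mathbf{X}:\Omega\rightarrow\mathbb{R}^{4}$ therefore has constant $x_{3}$, so $\Sigma$ lies in a hyperplane which I identify with $\mathbb{R}^{3}$ via the coordinates $(x_{1},x_{2},x_{4})$; the translating velocity $-\mathbf{e_{4}}$ becomes $(0,0,-1)$ in this hyperplane. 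Pulling the factor $i$ out of $f$, the reduced null curve is $\dfrac{2\,\overline{G}_{z}}{|G|^{4}-1}\left(i(1+G^{2}),\ -(1-G^{2}),\ 2G\right)$, and the $90^{\circ}$ rotation $(a,b,c)\mapsto(-b,a,c)$ of the hyperplane (which fixes the velocity) carries it to the stated
\[
\phi=\frac{2\,\overline{G}_{z}}{|G|^{4}-1}\left(1-G^{2},\ i(1+G^{2}),\ 2G\right).
\]
Now properties \textbf{(a1)}, \textbf{(a2)}, \textbf{(a3)} are inherited: nullity and non-degeneracy survive both deletion of a zero coordinate and a Euclidean rotation, and \textbf{(a3)} is just \textbf{(b3)} of Theorem \ref{WE} restricted to the three surviving coordinates. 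The metric is read off from \textbf{(c2)}, which for $g_{1}=g_{2}=G$ gives $ds^{2}=\dfrac{16\,|G_{\overline{z}}|^{2}}{\left(1-|G|^{2}\right)^{2}}|dz|^{2}=\dfrac{16\,|G_{\overline{z}}|^{2}}{\left(|G|^{2}-1\right)^{2}}|dz|^{2}$. Finally, by \textbf{(c3)} the generalized Gauss map of $\mathbf{X}$ is $\left[1+G^{2},i(1-G^{2}),0,-2iG\right]$; comparing the reduced/rotated data with the classical Weierstrass triple $\left(1-g^{2},i(1+g^{2}),2g\right)$ identifies $g=G$ as the complexified Gauss map of $\Sigma$ via stereographic projection from the north pole, which is assertion \textbf{(b)}.

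For completeness I would also indicate the self-contained route, forgetting Theorem \ref{WE}: \textbf{(a1)} is the elementary identity $(1-G^{2})^{2}-(1+G^{2})^{2}+4G^{2}=0$; \textbf{(a2)} and the metric follow from $|1-G^{2}|^{2}+|1+G^{2}|^{2}+4|G|^{2}=2\left(1+|G|^{2}\right)^{2}$ together with the nowhere-holomorphicity $G_{\overline{z}}\neq 0$; and \textbf{(a3)} is the only substantive point — writing $\phi=h\left(1-G^{2},i(1+G^{2}),2G\right)$ with $h=\dfrac{2\overline{G}_{z}}{|G|^{4}-1}$, computing $h_{\overline{z}}$ and then $(hG)_{\overline{z}}$, $(hG^{2})_{\overline{z}}$, and eliminating $\overline{G}_{z\overline{z}}=\overline{G_{z\overline{z}}}$ by means of the conjugate of (\ref{Gauss}), one checks that every component of $\phi_{\overline{z}}$ is real. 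That computation — the $\mathbb{C}^{3}$ analogue of the identities (\ref{Tq1})--(\ref{Tq3}) in Step B above — is the main obstacle, and the specialization argument is attractive precisely because it offloads this work onto the already-proved Theorem \ref{WE}.
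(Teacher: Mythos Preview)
Your proposal is correct and follows essentially the same route as the paper, namely specializing Theorem~\ref{WE} to a diagonal pair. The paper actually takes $(g_{1},g_{2})=(iG,iG)$ rather than $(G,G)$, which yields $f=\dfrac{2\overline{G}_{z}}{|G|^{4}-1}$ and $\phi^{(4)}=f\bigl(1-G^{2},\,i(1+G^{2}),\,0,\,2G\bigr)$ directly---so no auxiliary $90^{\circ}$ rotation is needed---but the two specializations differ only by this ambient isometry, and your verification of (\ref{CP}), (\ref{Ge1}), and (\ref{Gauss}) goes through unchanged.
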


\begin{proof}
We take $(g_{1}, g_{2})=(iG, iG)$ in Theorem \ref{WE}.
\end{proof}

\begin{example}[\textbf{Downward grim reaper cylinder as an analogue of Scherk's surface}] \label{grc}
${}_{}$ \\
\textbf{(a)} An application of our representation formula in
Theorem \ref{WEIER} to the solution
\[
G(z)=G(u+iv)=\tanh u \in (-1,1), \quad u+iv \in {\mathbb{C}}
\]
 of the translator equation (\ref{Gauss})
yields the conformal immersion $\mathbf{X}: {\mathbb{R}}^{2}
\rightarrow {\mathbb{R}}^{3}$
\[
\mathbf{X}(u,v)=\left(x_{1}, x_{2}, x_{3} \right) = \left( -2 \,
{\tan}^{-1} \left( \tanh u \right), \, 2v, \, - \ln \left( \cosh
(2u) \right) \, \right),
\]
\textbf{(b)} It represents the graphical translator with the
translating velocity $-\mathbf{e_{3}}$:
\[
x_{3} = \mathcal{F} \left( x_{1}, x_{2} \right)= \ln \left( \cos
x_{1} \right), \quad \left( x_{1}, x_{2} \right) \in \left( -
\frac{\pi}{2} , \frac{\pi}{2} \right) \times {\mathbb{R}}.
\]
Its height function $\mathcal{F}: \left( -\frac{\pi}{2},
\frac{\pi}{2} \right) \times \mathbb{R} \rightarrow \mathbb{R}$ is
a Jenkins-Serrin type solution of
\begin{equation} \label{gt}
\nabla \cdot \left( \frac{1}{\sqrt{ 1 + {\vert \nabla \mathcal{F}
\vert}^{2} }} \nabla \mathcal{F} \right) + \frac{ 1}{\sqrt{ 1 +
{\vert \nabla \mathcal{F} \vert}^{2} }} = 0
\end{equation}
and has $-\infty$ boundary values. Our graph $x_{3} = \mathcal{F}
\left( x_{1}, x_{2} \right)$ becomes a cylinder over the downward
grim reaper on the $x_{1}x_{3}$-plane. It can be viewed as an
analogue of the classical Jenkins-Serrin type minimal graph,
discovered by Scherk in 1834,
\[
x_{3} = \ln \left( \cos x_{1} \right)-\ln \left( \cos x_{2}
\right), \quad \left( x_{1}, x_{2} \right) \in \left( -
\frac{\pi}{2} , \frac{\pi}{2} \right) \times \left( -
\frac{\pi}{2} , \frac{\pi}{2} \right).
\]
Its height function takes the values $\pm \infty$ on alternate sides of the square domain. \\
\textbf{(c)} More generally, Scherk discovered the doubly periodic
minimal graph ${\Sigma}_{\rho}^{2\alpha}$ \cite{Nit88}:
\[
x_{3} = \frac{1}{\rho} \left[\, \ln \, \left( \cos \left(
\frac{\rho}{2} \left[ \frac{x_{1}}{\cos \alpha} -
\frac{x_{2}}{\sin \alpha} \right] \, \right) \right) - \ln \left(
\, \cos \left( \frac{\rho}{2} \left[ \frac{x_{1}}{\cos \alpha} +
\frac{x_{2}}{\sin \alpha} \right] \, \right) \right) \, \right]
\]
for some constants $\alpha \in \left(0, \frac{\pi}{2} \right)$ and
$\rho > 0$. It is defined on an infinite chess board-like net of
\textit{rhomboids}. Its picture is available at \cite{Web}.
However, unlike the deformations of Scherk's minimal surfaces by
\textit{shearing}, it is not possible to shear the grim reaper
cylinder to obtain non-trivial deformations of unit-speed
translators.
\end{example}

\begin{remark}[\textbf{Jenkins-Serrin type problem for graphical translators}] \label{IB}
A beautiful theory for infinite boundary value problems of minimal
graphs is developed by Jenkins and Serrin \cite{JS66}. Moreover,
Spruck \cite{Spr72} obtained a Jenkins-Serrin type theory for
constant mean curvature graphs. It would be very interesting to
investigate a similar Dirichlet problem for graphical translators.
In the following Example \ref{grctheta}, we prove that, for any $l
\geq \pi$, there exists a downward \textit{unit-speed}  graphical
translator that its height function is defined over an infinite
strip of width $l$ and takes the values $-\infty$ on its boundary.
We propose a conjecture that the lower bound $\pi$ is a critical
constant in the sense that, for any $l \in (0, \pi)$, there exists
no downward \textit{unit-speed}  graphical translator defined over
an infinite strip of width $l$ approaching $-\infty$ on its
boundary.
\end{remark}

\begin{example}[\textbf{Deformations of grim reaper cylinder}] \label{grctheta}
Let $\theta \in \mathbb{R}$ be a constant. \\
\textbf{(a)} We begin with the following solution
$G=G^{\theta}(z)$ of the translator equation (\ref{Gauss}):
\[
G(z)=G(u+iv)=\frac{\cosh \theta \sinh (2u) + i \sinh \theta}{1+
\cosh \theta \cosh (2u)} , \quad u+iv \in {\mathbb{C}}.
\]
Theorem \ref{WEIER} then induces the conformal immersion
${\mathbf{X}}^{\theta}=\left(x_{1}, x_{2}, x_{3} \right):
{\mathbb{R}}^{2} \rightarrow {\mathbb{R}}^{3}$
\[
\begin{cases}
x_{1} (u,v) = -2 \cosh \theta \, {\tan}^{-1} \left( \tanh u \right), \\
x_{2} (u,v) = \sinh \theta \ln \left( \cosh (2u) \right) + 2v, \\
x_{3} (u,v) = - \ln \left( \cosh (2u) \right) + 2v \sinh \theta.
\end{cases}
\]
The downward translator ${\mathbb{G}}^{\theta}={\mathbf{X}}^{\theta}\left({\mathbb{R}}^{2}\right)$ has the translating velocity $(0,0,-1)$. \\
\textbf{(b)} Using the patch ${\mathbf{X}}^{\theta}$, one can
easily check that Gauss map of the translator
${\mathbb{G}}^{\theta}$ lies on a half circle. Let us introduce a
new linear coordinate
\[
x_{0}=\frac{1}{\cosh \theta} x_{2}+\frac{\sinh \theta}{\cosh
\theta} x_{3}
\]
and then prepare an orthonormal basis
\[
{\mathcal{U}}_{1}=\left(1,0,0\right), \;
{{\mathcal{U}}_{2}}^{\theta} =\left(0,- \frac{\sinh \theta}{\cosh
\theta},\frac{1}{\cosh \theta}\right), \;
{{\mathcal{U}}_{3}}^{\theta}=\left(0,\frac{1}{\cosh
\theta},\frac{\sinh \theta}{\cosh \theta}\right).
\]
It is easily shown that our surface ${\mathbb{G}}^{\theta}$ admits
a new geometric patch
\[
\left(x_{1}, x_{2}, x_{3} \right) =
{\widehat{\mathbf{X}}}^{\theta}\left(x_{1},x_{0}\right) = x_{1} \,
{\mathcal{U}}_{1} + {\mathbf{T}}^{\theta}(x_{1}) \,
{{\mathcal{U}}_{2}}^{\theta} + x_{0} \,
{{\mathcal{U}}_{3}}^{\theta}.
\]
Here, ${\mathbf{T}}^{\theta}(\cdot)=\cosh \theta \ln \, \left(
\cos \left( \frac{\cdot}{\cosh \theta} \right) \, \right)$ is a
parabolic rescaling of the downward unit-speed grim reaper
function. The patch ${\widehat{\mathbf{X}}}^{\theta}$ says that
the surface ${\mathbb{G}}^{\theta}$ becomes a cylinder over a
parabolically rescaled grim reaper curve in the plane
spanned by ${\mathcal{U}}_{1}$ and ${{\mathcal{U}}_{2}}^{\theta}$. \\
\textbf{(c)} Our one-parameter family
$\{{\mathbb{G}}^{\theta}\}_{\theta \in {\mathbb{R}}}$ of cylinders
with the same translating velocity admits a simple geometric
description. Applying a suitable rotation in the ambient space
${\mathbb{R}}^{3}$ to the grim reaper cylinder ${\mathbb{G}}^{0}$
with velocity $-{{\mathcal{U}}_{2}}^{0}=(0,0,-1)$:
\[
  \left(x_{1}, x_{0}\right) \in \left( - \frac{\pi}{2}, \frac{\pi}{2} \right) \times \mathbb{R}
  \; \mapsto \; {\widehat{\mathbf{X}}}^{0}\left(x_{1},x_{0}\right) = x_{1} \, {\mathcal{U}}_{1} + {\mathbf{T}}^{0}(x_{1}) \, {{\mathcal{U}}_{2}}^{0} + x_{0} \, {{\mathcal{U}}_{3}}^{0},
\]
we obtain the congruent cylinder parametrized by
\[
  \left(x_{1}, x_{0}\right) \in \left( - \frac{\pi}{2}, \frac{\pi}{2} \right) \times \mathbb{R}
  \; \mapsto \;  x_{1} \, {\mathcal{U}}_{1} + {\mathbf{T}}^{0}(x_{1}) \, {{\mathcal{U}}_{2}}^{\theta} + x_{0} \, {{\mathcal{U}}_{3}}^{\theta},
\]
which translates with the rotated velocity
$-{{\mathcal{U}}_{2}}^{\theta}$ under the ${\mathcal{H}}$-flow.
However, we observe that this rotated cylinder also can be viewed
as a translator with new velocity  $- \cosh \theta \,
{{\mathcal{U}}_{2}}^{0}=(0,0,-\cosh \theta)$. Employing the
appropriate parabolic rescaling as a speed-down action, we meet
our cylinder ${\mathbb{G}}^{\theta}$ parametrized by
\[
  \left(x_{1}, x_{0}\right) \in \left( -\frac{\pi}{2}\cosh \theta, \frac{\pi}{2}\cosh \theta \right)  \times \mathbb{R}
  \; \mapsto \; {\widehat{\mathbf{X}}}^{\theta}\left(x_{1},x_{0}\right) = x_{1} \, {\mathcal{U}}_{1} + {\mathbf{T}}^{\theta}(x_{1}) \, {{\mathcal{U}}_{2}}^{\theta} + x_{0} \, {{\mathcal{U}}_{3}}^{\theta},
\]
which translates with velocity $-{{\mathcal{U}}_{2}}^{0}=(0,0,-1)$ under the ${\mathcal{H}}$-flow. \\
\textbf{(d)} We prove the claim in Remark \ref{IB}. We are able to
view the downward unit-speed translator ${\mathbb{G}}^{\theta}$ as
the graph of the function ${\mathcal{F}}^{\theta}: \left(
-\frac{\pi}{2}\cosh \theta, \frac{\pi}{2}\cosh \theta \right)
\times \mathbb{R} \rightarrow \mathbb{R}$:
\[
x_{3}={\mathcal{F}}^{\theta}\left(x_{1},x_{2}\right) = \cosh
\theta \, {\mathbf{T}}^{\theta}( x_{1} ) + \sinh \theta \; x_{2}.
\]
Its height function ${\mathcal{F}}^{\theta}$ solves the PDE
(\ref{gt}) over the strip of width $l=\pi \cosh \theta \geq \pi$.
\end{example}

 \textbf{Acknowledgement.} I would like to thank Brian White for letting me know about Tom Ilmanen's paper \cite{Il94}, where
 he presented the correspondence in Remark \ref{Ilc}. I also would like to thank Ildefonso Castro for his interests in this
 work and helpful comments on the Hamiltonian stationary Lagrangian translators in the complex plane \cite[Corollary 2,3]{CL10}.

\end{document}